\def\Yf{\mathcal{Y}_{f}}
\def\SS{{{\mathbb S}}}
\def\RR{{\mathbb R}}
\def\HH{{\mathbb H}}
\def\gt{\widetilde{g}}
\def\gt{\widetilde{g}}
\newcommand{\ricc}{\operatorname{Ric}}
\newcommand{\pa}[1]{{\left(#1\right)}}                  
\newcommand{\abs}[1]{{\left|#1\right|}}                 
\newcommand{\eps}{\varepsilon}                           
\renewcommand{\tilde}[1]{\widetilde{#1}}
\newtheorem{ackn}{Acknowledgments\!}
\newtheorem{theorem}{\textbf{Theorem}}[section]
\newtheorem{lemma}[theorem]{\textbf{Lemma}}
\newtheorem{proposition}[theorem]{\textbf{Proposition}}
\newtheorem{definition}[theorem]{\textbf{Definition}}
\theoremstyle{remark}
\numberwithin{equation}{section}
\def\neweq#1{\begin{equation}\label{#1}}
\def\endeq{\end{equation}}
\def\eq#1{(\ref{#1})}
\def\R{\mathbb{R}}
\def\eps{\varepsilon}
\title[A conformal Yamabe problem with potential on the euclidean space]
{A conformal Yamabe problem with potential\\ on the euclidean space}
\author[G. Catino]{Giovanni Catino}
\address[Giovanni Catino]{Dipartimento di Matematica, Politecnico di Milano, Piazza Leonardo da Vinci 32, 20133 Milano, Italy}
\email[]{giovanni.catino@polimi.it}
\author[F. Gazzola]{Filippo Gazzola}
\address[Filippo Gazzola]{Dipartimento di Matematica, Politecnico di Milano, Piazza Leonardo da Vinci 32, 20133 Milano, Italy}
\email[]{filippo.gazzola@polimi.it}
\author[P. Mastrolia]{Paolo Mastrolia}
\address[Paolo Mastrolia]{Dipartimento di Matematica, Universit\`{a} degli Studi di Milano, Via Saldini 50, 20133 Italy.}
\email[]{paolo.mastrolia@unimi.it}
\begin{document}

\begin{abstract}
We consider, in the Euclidean setting, a conformal Yamabe-type equation related to a potential generalization of the classical constant scalar curvature problem and which naturally arises in the study of Ricci solitons structures. We prove existence and nonexistence results, focusing on the radial case, under some general hypothesis on the potential.
\end{abstract}

\maketitle

\begin{center}

\noindent{\it Key Words: Yamabe problem, conformal problems, constant scalar curvature, ordinary differential equations}

\medskip

\centerline{\bf AMS subject classification:  53C20, 53C25, 34A34}

\end{center}

\

\

\section{introduction}

In \cite{cm} the first and the third author considered "potential" generalizations of some canonical metrics on smooth complete Riemannian manifolds. In this paper we focus our attention on one of those classes, namely {\em $f$-Yamabe metrics}. We recall that, given a $n$-dimensional Riemannian manifold $(M,g)$, where $g$ is the metric, and a smooth function $f\in C^\infty(M)$, we say that the triple $(M,g,f) \in \Yf$ if and only if it satisfies the condition
\begin{equation}\label{yf}
\nabla R = 2\ricc(\nabla f, \cdot)\,,
\end{equation}
where $\ricc$ and $R$ are, respectively, the Ricci and the scalar curvature of $g$ and $\nabla$ denotes the Levi-Civita connection associated to $g$. In a local orthonormal frame $\{e_i\}$, $i=1,\ldots,n$, \eqref{yf} becomes
$$
\nabla_{e_i} R = 2 R_{ij} \nabla_{e_j} f \,,
$$
where $R_{ij}=\ricc(e_i,e_j)$. Note that we are using the Einstein summation convention over repeated indices. This equation is a meaningful generalization of the one for {\em constant scalar curvature metrics} and naturally arises in the study of Ricci solitons structures (for a general overview see \cite{cao}). Moreover, it is clear that any Ricci flat metric satisfy  \eqref{yf}, for any function $f$ and, more in general, any product of a Ricci flat metric with a metric with constant scalar curvature solves \eqref{yf}, for any function $f$ which depends only on the first factor.

In the same spirit of the classical {\em Yamabe problem} it is natural to address the following questions:

\begin{itemize}

\item[(A)] having fixed $f\in C^{\infty}(M)$, does there exist a metric $g$ such that $(M,g,f)\in \Yf$?

\item[(B)] having fixed $f\in C^{\infty}(M)$ and a metric $g$, does there exist a conformal metric $\widetilde{g}$ in the conformal class $[g]$ such that $(M,\widetilde{g},f)\in \Yf$?

\end{itemize}
More generally, one could ask the question

\begin{itemize}

\item[(C)] does there exist a metric $g$ and a smooth function $f\in C^{\infty}(M)$ such that $(M,g,f)\in \Yf$?

\end{itemize}
Clearly the answer to (C) is positive, since it is always possible to construct a (complete) metric with constant (negative) scalar curvature (\cite{aubin} and \cite{blakal}). Furthermore, when $f$ is constant, (B) boils down to the well known Yamabe problem, which is completely solved when $M$ is compact (see e.g.  \cite{leepar}). We will refer to (B) as the {\em conformal $f$-Yamabe problem}. In this paper we consider problem (B) (when $f$ is not constant) on the Euclidean space $\RR^n$ endowed with the standard flat metric $g_{\RR^n}$. In particular, in dimension four, we prove the following:

\begin{theorem} Let $f\in C^{\infty}(\RR^4)$ be a, not constant, radial function satisfying $f'(r)\leq 0$ for all $r>0$. 
Then there exists a conformal metric $\widetilde{g}\in[g_{\RR^n}]$ such that $(\RR^n,\widetilde{g},f)\in \Yf$.
\end{theorem}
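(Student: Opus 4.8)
The plan is to seek $\widetilde g$ in the conformal class of the flat metric of the form $\widetilde g=u^{2}g_{\RR^4}$ with $u=u(r)>0$ radial (recall $\tfrac{4}{n-2}=2$ for $n=4$). Then $\widetilde g$ is rotationally symmetric, so $\widetilde R$ is radial and $\widetilde\ricc(\widetilde\nabla f,\cdot)$ is a radial vector field; hence the tensor identity \eqref{yf} reduces to a single scalar ODE. To linearise the conformal geometry I would pass to cylindrical (Emden--Fowler) coordinates: with $t=\log r$ and $v(t)=r\,u(r)$ one has $g_{\RR^4}=e^{2t}(dt^{2}+g_{\SS^3})$, so $\widetilde g=v^{2}(dt^{2}+g_{\SS^3})$ is conformal to the product cylinder $\RR\times\SS^{3}$. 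Using the conformal/warped--product formulas one finds, with $'=d/dt$,
\[
\widetilde R=\frac{6}{v^{3}}\,(v-v''),\qquad\quad \widetilde R_{\sigma\sigma}=-3\,\frac{v\,v''-(v')^{2}}{v^{4}},
\]
$\widetilde R_{\sigma\sigma}$ being the radial Ricci component. Since $f$ is radial the conformal factor cancels in the radial component of \eqref{yf}, which becomes $\widetilde R'=2\widetilde R_{\sigma\sigma}\dot F$ with $F(t):=f(e^{t})$, i.e. the third order equation
\[
v\,v'''-3\,v'v''+2\,v\,v'=\bigl(v\,v''-(v')^{2}\bigr)\dot F .
\]

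The decisive simplification is the substitution $w:=(\log v)'=v'/v$: a direct computation gives $v\,v''-(v')^{2}=w'v^{2}$ and $v\,v'''-3v'v''+2vv'=(w''-2w^{3}+2w)v^{2}$, so $v$ drops out and the equation becomes the autonomous-plus-forcing second order ODE
\[
w''+V'(w)=\dot F\,w',\qquad V(w)=w^{2}-\tfrac12 w^{4}.
\]
Here $\dot F(t)=e^{t}f'(e^{t})=r\,f'(r)\le0$ by hypothesis, so reading this as a mechanical system the energy $E=\tfrac12(w')^{2}+V(w)$ obeys $\dot E=\dot F\,(w')^{2}\le0$: the sign condition $f'\le0$ makes the motion \emph{dissipative}. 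The equilibria are the well $w=0$ ($V''(0)>0$) and the two saddles $w=\pm1$ at height $V=\tfrac12$. Regularity at the origin forces the choice: since $u=v/r$ must tend to a positive constant as $r\to0^{+}$, one needs $w\to1$ as $t\to-\infty$. I would therefore take the solution lying on the one-dimensional unstable manifold of the saddle $w=1$, on the branch entering the well $\{|w|<1\}$.

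It remains to show this trajectory is global and produces an admissible metric, and here the sign hypothesis is essential. The set $\{E\le\tfrac12\}$ equals $\{|w'|\le|1-w^{2}|\}$, which the pinch points $(\pm1,0)$ separate into a bounded central lens ($|w|<1$) and unbounded outer tongues. Because $f$ is not constant and $r f'\le0$, the energy strictly decreases somewhere along the orbit, so $E$ drops below $\tfrac12$ and the orbit is trapped in a compact subset of the lens, uniformly away from the pinch points; hence $w$ is bounded and the solution exists for all $t\in\RR$, i.e. for all $r\in(0,\infty)$. Recovering $v=\exp\!\int w$ (automatically positive) and $u=v/r$ gives $u>0$ on $\RR^4\setminus\{0\}$; on the unstable manifold $w-1\sim c\,e^{2t}=c\,r^{2}$, and since $r f'(r)$ is a smooth function of $r^{2}$ one obtains an expansion of $u$ in powers of $r^{2}$, hence a smooth positive extension across the origin. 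As $w\not\equiv1$ the metric is genuinely non-flat, with nonconstant scalar curvature --- which is the content one wants, the flat metric itself solving \eqref{yf} trivially for every $f$.

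The main obstacle I expect is precisely this global-existence-with-asymptotics step: confining the orbit of a \emph{non-autonomous} system (where the monotonicity of $E$, and hence $f'\le0$, is indispensable, and where non-constancy of $f$ is used to push $E$ strictly below the separatrix value $\tfrac12$) and upgrading the leading-order behaviour at $r=0$ to genuine smoothness of $u$ at the origin.
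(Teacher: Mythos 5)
Your reduction is correct, and it is not really a different route: unwinding the changes of variables, your phase function $w$ equals $1+v$, where $v(r)=ru(r)$ is exactly the function introduced in the paper's Section 8, your equation $w''+V'(w)=\dot F\,w'$ is the paper's equation \eqref{equv} with $n=4$ rewritten in the variable $t=\log r$, and your energy is the paper's disguised Lyapunov function (under $y=w-1$ the paper's $H(y)=\tfrac{(y+2)^2y^2}{4}$ equals $\tfrac12\bigl(\tfrac12-V(w)\bigr)$, and the paper's multiplication of \eqref{equv} by $r^2v'$ followed by integration is precisely your dissipation identity $\dot E=\dot F\,(w')^2\le 0$). So the approach is the paper's; the issue is whether your confinement step is complete, and there it has a genuine gap.

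The gap is in the sentence ``because $f$ is not constant and $rf'\le0$, the energy strictly decreases somewhere along the orbit, so $E$ drops below $\tfrac12$ and the orbit is trapped in a compact subset of the lens, uniformly away from the pinch points.'' Three things go wrong. First, the orbit emanates from the pinch point $(1,0)$ as $t\to-\infty$, so it is never uniformly away from the pinch points. Second, if $f'$ vanishes identically near $r=0$ then $E\equiv\tfrac12$ on an initial time interval, during which the orbit runs along the separatrix; the strict drop, if it occurs, occurs only later. Third, and decisively, to assert that $E$ strictly decreases ``somewhere along the orbit'' you must already know that the orbit survives (i.e. stays in the lens, where it is bounded) until it reaches the region where $\dot F<0$ and $w'\ne0$ --- but survival is exactly what is being proved, so the argument as written is circular. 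The missing step, which is also what the paper's proof supplies, is the escape argument at the pinch points: since $E\le\tfrac12$ always, a first crossing of $|w|=1$ at a finite time $T$ forces $\tfrac12 w'(T)^2+\tfrac12\le\tfrac12$, i.e. $(w(T),w'(T))=(\pm1,0)$; but $w\equiv 1$ and $w\equiv-1$ are exact solutions of the full non-autonomous equation (both $V'(\pm1)$ and the forcing term $\dot F\,w'$ vanish there), so by uniqueness for the ODE the orbit would be constant, contradicting that it enters the well. This is exactly the contradiction the paper extracts from its identity, where the nonnegative boundary term $\tfrac{R^2v'(R)^2}{2}$ and the nonnegative integral must both vanish. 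Once this is in place the orbit stays in the compact closed lens $\{|w|\le1,\ |w'|\le 1-w^2\}$ and global existence follows --- and note that non-constancy of $f$ is then not needed at all: the paper's Theorem \ref{existence} holds for every $h\le0$, including $h\equiv0$. Finally, your description of the solution as ``the branch of the unstable manifold of the saddle'' also needs justification for a non-autonomous system, together with the upgrade to smoothness of the metric across the origin; the paper handles the corresponding point by formulating the singular initial value problem \eqref{equaz}--\eqref{ic} and invoking Ni--Serrin and Franchi--Lanconelli--Serrin, and you would need a comparable argument there.
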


\

\section{ODE formulation of the conformal $f$-Yamabe problem}

Let $(M,g)$ be a smooth $n$-dimensional Riemannian manifold, $n\geq 2$, and let $f\in C^\infty(M)$. It is well known (see for instance \cite{catmasmonrig}) that, if $\gt=e^{2w}g \in [g]$ for some $w\in C^\infty(M)$, then the following formulas hold:
\begin{align*}
\widetilde{\ricc} &= \ricc -\pa{n-2}\nabla^2 w+\pa{n-2}dw\otimes dw-(\Delta w)\, g-\pa{n-2}\abs{\nabla w}^2 g \,,\\
\tilde{R} &= e^{-2w}\left(R -2\pa{n-1}\Delta w-\pa{n-1}\pa{n-2}\abs{\nabla w}^2\right) \,
\end{align*}
where $\nabla^2$ is the Hessian and $\Delta=g^{ij}\nabla_{ij}^2$ is the Laplace-Beltrami operator of $g$. A computation shows that $(M,\widetilde{g},f)=(M, e^{2w}g,f) \in \Yf$ if and only if the function $w$ solves the system of PDEs
    \begin{align}\label{cfypg}
      \nabla \Delta w &+ (n-2)\nabla^{2}w(\nabla w,\cdot)-\Big(2\Delta w + (n-2)|\nabla w|^{2} - \frac{1}{n-1}R\Big) \nabla w  -\frac{1}{2(n-1)}\nabla R  \\ \nonumber
     &= -\frac{1}{n-1}\ricc(\nabla f, \cdot) +\frac{n-2}{n-1}\nabla^{2}w(\nabla f,\cdot) + \frac{1}{n-1}\Big(\Delta w +(n-2)|\nabla w|^{2}\Big)\nabla f
\\ \nonumber &- \frac{n-2}{n-1}\langle \nabla w,\nabla f\rangle \nabla w. \,
    \end{align}
In particular, since $(\RR^n, g_{\RR^n})$ is Ricci flat, then $(\RR^n,\widetilde{g},f)=(\RR^n, e^{2w}g_{\RR^n},f)\in \Yf$ if and only if
$w$ solves the system of PDEs
    \begin{align}\label{cfyp}
      \nabla \Delta w &+ (n-2)\nabla^{2}w(\nabla w,\cdot)-\Big(2\Delta w + (n-2)|\nabla w|^{2}\Big) \nabla w    \\ \nonumber
     &= \frac{n-2}{n-1}\nabla^{2}w(\nabla f,\cdot) + \frac{1}{n-1}\Big(\Delta w +(n-2)|\nabla w|^{2}\Big)\nabla f
- \frac{n-2}{n-1}\langle \nabla w,\nabla f\rangle \nabla w. \,
    \end{align}
To fully exploit the symmetries of the Euclidean space, it is reasonable to start our analysis by considering radial solutions $w=w(r)$ of \eqref{cfyp} for a given radial function $f=f(r)$, where $r$ denotes the distance function from the origin. In this case, in standard polar coordinates, one has
\begin{align*}
g_{\RR^n} &=dr^2 + r^2 g_{\SS^{n-1}} \,,\\
d w &= w'(r) dr \quad \text{and}\quad d f = f'(r) dr\,,\\
\nabla dw &= \nabla^2 w = w''(r) dr^2 +r w'(r) g_{\SS^{n-1}} \,,\\
\Delta w &= w''(r)+\frac{n-1}{r}w'(r)\,
\end{align*}
and a computation shows that system \eqref{cfyp} boils down to the following second order nonlinear ODE for the function  $u(r):=w'(r)$
\begin{align}\label{equaz}
u''(r)+\frac{n-1}{r}u'(r)-(n-2)u(r)^3-&\frac{2(n-1)}{r}u(r)^2-\frac{n-1}{r^2}u(r)+(n-4)u(r)u'(r)\\\nonumber&=\left[u'(r)+\frac{u(r)}{r}\right]h(r)\,,
\end{align}
where $h(r):=f'(r)$. Note that if $n=2$, then the cubic term disappears in \eq{equaz}.\par
We then impose the initial conditions
\begin{align}\label{ic}
u(0)=0\, ,\qquad u'(0)=\alpha\neq0
\end{align}
which require some explanation since \eq{equaz} is singular at $r=0$. Assume that $u\in C^2[0,\infty)$ satisfies \eq{ic}, then $u(r)=\alpha r+O(r^2)$ as
$r\to0$ and, in turn,
$$
\frac{n-1}{r}u'(r)-\frac{n-1}{r^2}u(r)=O(1)\, ,\quad\frac{u(r)}{r}=O(1)\, ,\quad\frac{u(r)^2}{r}=o(1)\qquad\mbox{as }r\to0
$$
which shows that, by combining suitably the terms in \eq{equaz}, we obtain finite limits as $r\to0$. The existence and uniqueness of a solution of
\eq{equaz}-\eq{ic} can then be proved rigorously by adapting the arguments of Proposition 1 in \cite{niserrin}: one needs to combine the Ascoli-Arzel\`a
Theorem with the Schauder fixed point Theorem in order to obtain existence of a solution. Then the solution is unique as long as it can be
continued \cite[Proposition 4.2]{fls}.\par
Before stating our existence and nonexistence results, let us discuss heuristically the structure of \eq{equaz}.

\

\section{Heuristic preliminaries}

We first notice that, if $n\ge3$, then there exist exactly two singular (negative) solutions of \eq{equaz} of the type $cr^{-1}$ given by
\neweq{singular}
u_1(r)=-\, \frac1r \, ,\quad u_2(r)=-\, \frac2r\, ,
\endeq
regardless of the explicit form of $h$. This fact suggests that the ``interesting dynamics'' for \eq{equaz} occurs when $u(r)<0$ and global solutions of
\eq{equaz} are more likely to be prevalently negative. If $n=2$ then the functions $u(r)=cr^{-1}$ are singular solutions of \eq{equaz} for any $c\neq0$;
in particular, there exist infinitely many positive singular solutions and the dynamics appears much more chaotic.\par
It is quite useful to consider the two functions defined for all $(r,y)\in\R_+\times\R$:
$$
P(r,y)=\left((n-2)y^2+\frac{2(n-1)}{r}y+\frac{n-1}{r^2}+\frac{h(r)}{r}\right)y\, ,\quad Q(r,y)=h(r)-\frac{n-1}{r}+(4-n)y\, .
$$
Then \eq{equaz} may be written in normal form as
\neweq{poly}
u''(r)=Q\big(r,u(r)\big)u'(r)+P\big(r,u(r)\big)\, .
\endeq
Depending on $h\in C^0[0,\infty)$, we define the two regions
$$
I_h:=\big\{r\ge0;\, (n-2)rh(r)<n-1\big\}\, ,\qquad I^h=\big\{r\ge0;\, (n-2)rh(r)>n-1\big\}\, .
$$
Clearly, $I_h$ contains a right neighborhood of $r=0$ and is therefore nonempty for all $h$, while $I^h$ is empty if $rh(r)\le\frac{n-1}{n-2}$ for
all $r$, in particular if $h(r)\le0$. It is also straightforward that:\par
$\bullet$ if $r\in I^h$, then $P(r,y)=0$ if and only if $y=0$, moreover $P$ has the same sign as $y$;\par
$\bullet$ if $r\in I_h$, then we may write
$$
P(r,y)=(n-2)\left[y+\frac{n-1+\sqrt{n-1-(n-2)rh(r)}}{(n-2)r}\right]\left[y+\frac{n-1-\sqrt{n-1-(n-2)rh(r)}}{(n-2)r}\right]y
$$
and hence $P(r,y)$ vanishes if and only if one of the following facts occurs:
$$
y=0\, ,\quad y=\varphi(r):=-\frac{n-1+\sqrt{n-1-(n-2)rh(r)}}{(n-2)r}\, ,
$$
$$
y=\psi(r):=\frac{1-n+\sqrt{n-1-(n-2)rh(r)}}{(n-2)r}\, .
$$
Note that $\psi(r)>\varphi(r)$ for all $r\in I_h$ but, while $\varphi(r)<0$ for all $r\in I_h$, the sign of $\psi(r)$ may vary and it is the opposite
of the sign of $rh(r)+n-1$; in particular, $\psi(r)<0$ in a right neighborhood of $r=0$.\par
One expects a crucial role for the existence results to be played by the signs of $Q$ and $P$. However, the overall picture is not completely clear.
To see this, consider the trivial case $h\equiv0$ for which the function
\neweq{trivial}
\overline{u}_a(r)=-\, \frac{2ar}{1+ar^2}
\endeq
solves \eq{equaz}-\eq{ic} with $\alpha=-2a$. First notice that if $a<0$ (so that $u'_a(0)>0$), then $u_a$ blows up as $r\to1/\sqrt{|a|}$.
Therefore, $u_a$ is a global solution of \eq{equaz} if and only if $a>0$. Simple computations then show that
$$
\mbox{if }n=3,4,5\mbox{ then }\exists\rho>0\, ,\ \varphi(r)<\overline{u}_1(r)<\psi(r)\ \forall r>\rho\, ,
$$
$$
\mbox{if }n\ge6\mbox{ then }\exists\rho>0\, ,\ \overline{u}_1(r)<\varphi(r)\ \forall r>\rho\, .
$$
These facts are illustrated in Figure \ref{shaded}, where the shaded region is
$$\Gamma:=\{(r,y)\in\R_+\times\R_-;\, \varphi(r)<y<\psi(r)\}\, .$$
In the left picture we see that the graph of $\overline{u}_1$ (thick line) eventually lies inside $\Gamma$ while in the right picture it eventually lies
outside. Therefore, the function $P(r,\overline{u}_1(r))$ does not always have the same sign as $r\to\infty$.\par

\

\begin{figure}[ht]
\begin{center}
\includegraphics[height=40mm, width=75mm]{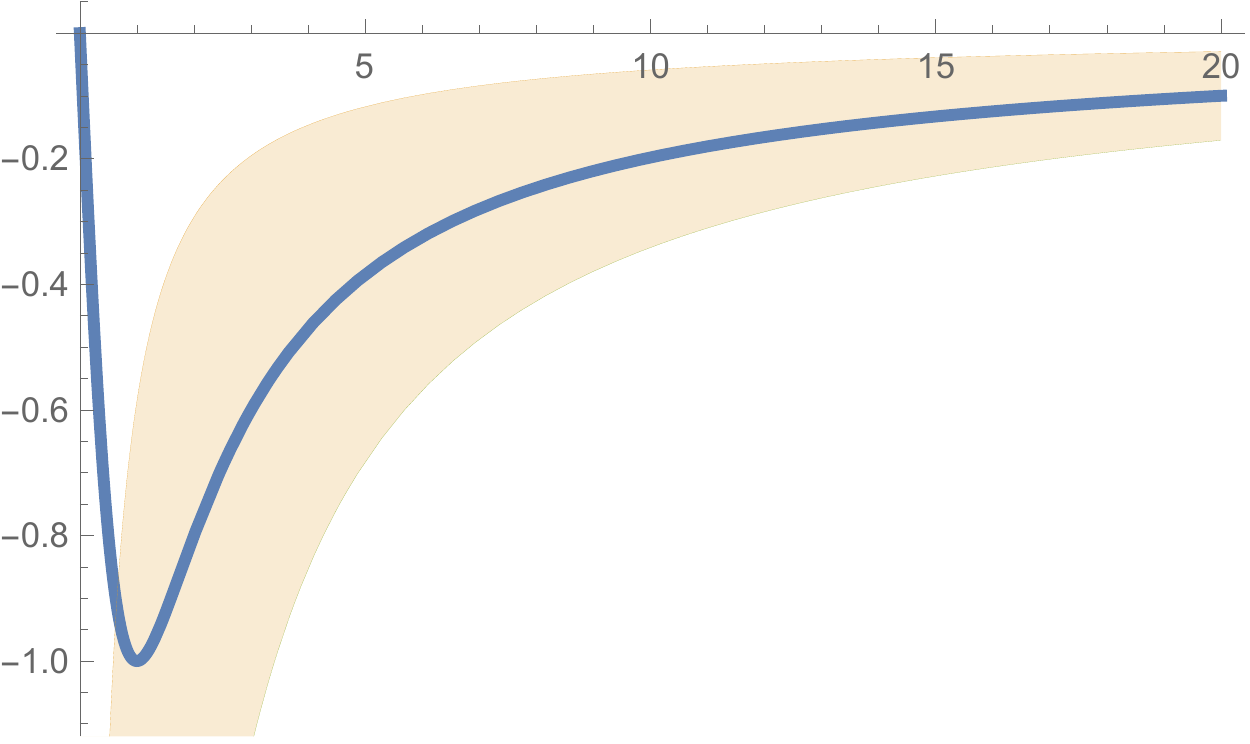}\quad\includegraphics[height=40mm, width=75mm]{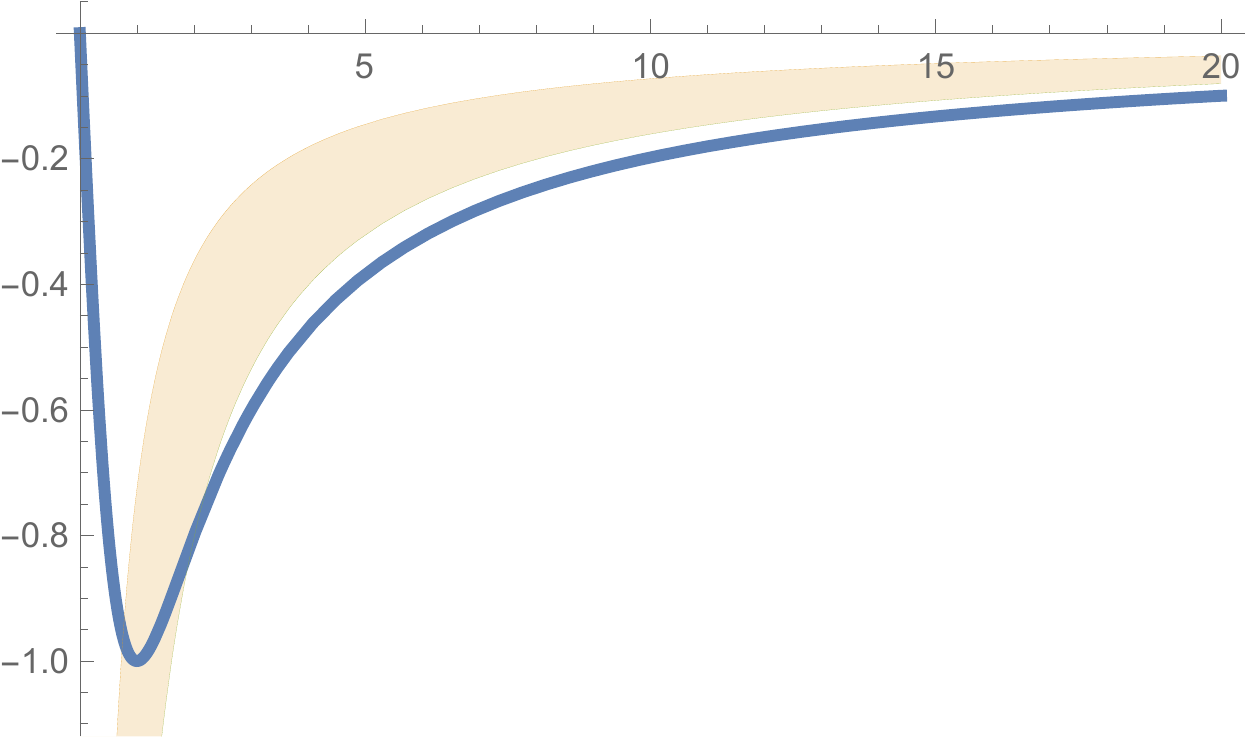}
\caption{Plot of $\overline{u}_1(r)$ in \eq{trivial} (thick line) and of $\Gamma$ (shaded region) when $n=3$ (left) and $n=8$ (right).}\label{shaded}
\end{center}
\end{figure}

\

\section{Nonexistence results}

We can the prove the following (partial) nonexistence results.

\begin{theorem}\label{nonexists}
If $n\ge3$, $h(r)\ge-\frac{n-1}{r}$ for all $r>0$, $\alpha>0$, then the solution of \eqref{equaz}-\eqref{ic} is not global.
\end{theorem}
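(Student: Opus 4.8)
The plan is to put \eqref{equaz} in the normal form \eqref{poly} and show that the solution, which starts out increasing from the origin because $\alpha>0$, is forced to blow up at a finite radius. First I would extract the sign information hidden in the hypothesis. Since $h(r)\ge-\frac{n-1}{r}$ is equivalent to $n-1+rh(r)\ge0$, for every $y>0$ and $r>0$ the bracket defining $P$ satisfies $(n-2)y^2+\frac{2(n-1)}{r}y+\frac{n-1+rh(r)}{r^2}>0$, so $P(r,y)>0$ whenever $y>0$. With $u(0)=0$, $u'(0)=\alpha>0$ we have $u>0$ and $u'>0$ on a right neighbourhood of $0$, and I claim this persists on the whole maximal interval $[0,R)$. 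Indeed, if $r_1$ were the first point where $u'(r_1)=0$, then $u(r_1)>0$ (as $u$ increases on $(0,r_1)$) and \eqref{poly} gives $u''(r_1)=P(r_1,u(r_1))>0$, contradicting that $u'$ decreases to $0$ at $r_1$. Thus $u$ is positive and strictly increasing on $[0,R)$; arguing by contradiction, suppose $R=\infty$.

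Next I would rule out that $u$ stays bounded. Being increasing, $u$ has a limit $L\in(0,\infty]$; if $L<\infty$, then $P(r,u)\ge(n-2)(L/2)^3=:c_0>0$ for large $r$, while $Q(r,u)=h-\frac{n-1}{r}+(4-n)u$ is bounded below by some constant $-C$ (using $h\ge-\frac{n-1}{r}$ and $u\le L$). Hence $u''\ge-Cu'+c_0$ for large $r$, and the elementary estimate for $u''+Cu'\ge c_0$ (multiply by $e^{Cr}$ and integrate) forces $\liminf_{r\to\infty}u'\ge c_0/C>0$, so $u\to\infty$, a contradiction. Therefore $u\to\infty$.

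The heart of the matter is to produce a finite-time blow-up in spite of the first-order term $Q(r,u)u'$ in \eqref{poly}, whose sign is not controlled; the genuinely dangerous piece is $(4-n)uu'$, which is negative for $n\ge5$ and is of the same order as the driving cubic near a blow-up profile $u\sim(T-r)^{-1}$. The key observation is that this term is an exact derivative: setting $G:=u'+\frac{n-4}{2}u^2$ one computes, directly from \eqref{equaz}, that $G'=(h-\frac{n-1}{r})u'+P(r,u)$, so the offending $(4-n)uu'$ has been absorbed and the only remaining indefinite term, $(h-\frac{n-1}{r})u'$, is bounded below by $-\frac{2(n-1)}{r}u'$ and is of lower order in $r$. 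For $n\ge4$ one has $G\ge u'>0$. I would then close the argument by a dichotomy, after fixing a small $c>0$ with $\beta(c):=-2c^2+(4-n)c+(n-2)>0$ and a large $r_0$ (possible since $u\to\infty$). If $u'(r_1)\ge c\,u(r_1)^2$ for some $r_1\ge r_0$, a barrier argument for $z:=u'-cu^2$ works: at a zero of $z$ one finds $z'\ge\beta(c)u^3-\frac{2(n-1)c}{r}u^2>0$, so $z$ cannot cross downward and $u'\ge cu^2$ thereafter, whence $(-1/u)'\ge c$ integrates to a finite blow-up radius. If instead $u'<cu^2$ for all $r\ge r_0$, then $G\le C_1u^2$ with $C_1=c+\frac{n-4}{2}$, while $G'\ge(n-2)u^3-\frac{2(n-1)c}{r}u^2\ge\frac{n-2}{2}u^3\ge c_3\,G^{3/2}$ for large $r$, a superlinear Riccati inequality forcing $G$, and hence $u$, to blow up in finite time. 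Either way $R<\infty$, the desired contradiction.

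For $n=3$ the term $(4-n)uu'=uu'$ is in fact favourable and the argument simplifies, needing no dichotomy: once $r\ge r_0$ is large enough that $u(r_0)\ge 4/r_0$, the quantity $(u-\frac{4}{r})u'$ is nonnegative and \eqref{poly} gives $u''\ge u^3$; multiplying by $u'>0$ and integrating yields $u'\ge\frac12u^2$ for large $u$, and the same Riccati integration produces finite-time blow-up. The main obstacle throughout is precisely this indefinite first-order term, and I expect the substitution $G$ to be the decisive device: it rewrites \eqref{equaz} as an equation whose right-hand side is, up to a harmless $O(1/r)$ contribution controlled by the hypothesis, manifestly positive and superlinear, which is exactly what drives the blow-up.
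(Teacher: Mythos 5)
Your proposal is correct, but it takes a genuinely different route from the paper. You share only the first step (on the maximal interval $u>0$ and $u'>0$, since at a first zero of $u'$ the hypothesis $n-1+rh(r)\ge 0$ forces $u''=P(r,u)>0$); from there the paper reduces \eqref{equaz} to the differential inequality \eqref{ineq} for $n\in\{3,4\}$, resp.\ \eqref{ineq2} for $n\ge5$, and runs a Mitidieri--Poho\v zaev nonlinear-capacity argument: it tests against rescaled cutoffs $\phi_\rho$ satisfying the $\rho$-property of Definition \ref{rho}, applies Young's inequality via Lemmas \ref{MP} and \ref{MP2}, and lets $\rho\to\infty$, so that the right-hand side decays like $\rho^{-3}$ while the left-hand side stays bounded away from zero, contradicting globality. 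Your argument is instead purely pointwise: you absorb the indefinite term $(4-n)uu'$ into $G=u'+\frac{n-4}{2}u^2$ (the paper handles the same term by writing it as $\frac{n-4}{2}\big(u^2\big)'$ and integrating by parts, which is exactly Lemma \ref{MP2}), and close with the barrier $z=u'-cu^2$ and a Riccati comparison; I checked the key identity $G'=\big(h-\frac{n-1}{r}\big)u'+P(r,u)$ against \eqref{equaz} and both branches of your dichotomy, and they are sound. What each buys: your proof is elementary and self-contained, and it even yields an explicit bound on the blow-up radius (e.g.\ $R\le r_1+\frac{1}{c\,u(r_1)}$ in the first branch); the paper's proof needs no dichotomy, barrier, or monotonicity beyond the initial reduction, treats all $n\ge3$ by one integral mechanism, and, being a test-function (capacity) argument, is robust and adaptable to genuinely PDE, non-radial settings. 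Two cosmetic remarks on your write-up: at a zero of $z$ the sharper estimate is $z'\ge\beta(c)u^3+\frac{2(n-1)(1-c)}{r}u^2$, so for $0<c<1$ the negative term you carry is unnecessary and no largeness of $r_0$ is needed in that branch; and your preliminary exclusion of bounded solutions can be skipped, since in the second branch $G'\ge\frac{n-2}{2}\,u(r_0)^3>0$ already contradicts boundedness of $G$.
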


The proof of Theorem \ref{nonexists} is given in Section \ref{proofnon}. As a by-product, the very same proof enables us to obtain

\begin{theorem}\label{nonexists2}
If $n\ge3$, $h(r)\ge-\frac{n-1}{r}$ for all $r>0$, then any global solution $u$ of \eqref{equaz}-\eqref{ic} is necessarily strictly negative.
\end{theorem}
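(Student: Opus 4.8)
The plan is to rule out, for a global solution, any excursion of $u$ into the region $\{y>0\}$, the key input being a sign property of the forcing term $P$ in the normal form \eqref{poly}. The hypothesis $h(r)\ge-\frac{n-1}{r}$ is exactly equivalent to $\frac{n-1}{r^2}+\frac{h(r)}{r}\ge0$ for all $r>0$, so that for every $r>0$ and every $y>0$ all three summands in the bracket of
\[
P(r,y)=\Big((n-2)y^2+\frac{2(n-1)}{r}y+\frac{n-1}{r^2}+\frac{h(r)}{r}\Big)y
\]
are nonnegative, the middle one being strictly positive; hence $P(r,y)>0$ whenever $y>0$. This is the same structural fact that underlies Theorem \ref{nonexists}.

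First I would reduce to the case $\alpha<0$. Since $\alpha\ne0$, the only alternative is $\alpha>0$, and then Theorem \ref{nonexists} directly asserts that the solution is not global; therefore every global solution satisfies $\alpha<0$, so that $u<0$ on a maximal initial interval $(0,\varepsilon)$.

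Next, arguing by contradiction, suppose a global solution with $\alpha<0$ fails to be strictly negative, and set $r_0:=\sup\{\rho>0:\ u<0\ \text{on}\ (0,\rho)\}$. If $r_0=+\infty$ there is nothing to prove, so assume $r_0<+\infty$; by continuity and maximality $u<0$ on $(0,r_0)$ and $u(r_0)=0$, whence $u'(r_0)\ge0$. I claim $u'(r_0)>0$. Otherwise $u$ and the trivial solution $u\equiv0$ would both solve \eqref{poly} with the same Cauchy data $(u,u')=(0,0)$ at the interior point $r_0>0$, where the right-hand side of \eqref{poly} is locally Lipschitz in $(u,u')$ (its coefficients are smooth for $r>0$ and $h\in C^0$); by uniqueness $u\equiv0$ near $r_0$, hence on all of $(0,\infty)$ by continuation, forcing $\alpha=u'(0)=0$, a contradiction. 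Thus $u(r_0)=0$ and $u'(r_0)>0$, so $u>0$ and $u'>0$ immediately past $r_0$.

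Finally, I would show that this configuration is forward invariant and leads to blow-up. As long as $u>0$, the slope $u'$ cannot return to zero: if $r_1>r_0$ were the first point with $u'(r_1)=0$, then $u>0$ on $(r_0,r_1]$ and $u''(r_1)=P(r_1,u(r_1))>0$ by the sign computation above, which is incompatible with $u'$ decreasing to $0$ at $r_1$. Hence $u'>0$ and $u$ is increasing and positive for all $r>r_0$ in its interval of existence, and the very same argument used to prove Theorem \ref{nonexists}---now run from the base point $r_0$ rather than from the origin---yields finite-time blow-up, contradicting globality. The main obstacle is precisely this last step: in the increasing positive regime one has $u''\ge Q(r,u)u'+(n-2)u^3$, and one must show that the cubic forcing overrides the first-order term $Q(r,u)u'$ despite the fact that $Q$ may be negative, so as to drive $u\to+\infty$ in finite $r$; this is carried out exactly as in the proof of Theorem \ref{nonexists}.
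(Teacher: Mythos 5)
Your proposal is correct and follows essentially the same route as the paper's own (very brief) proof: reduce to $\alpha<0$ via Theorem \ref{nonexists}, then show that if a global solution ever leaves the negative region it enters the regime $u,u'>0$ and the blow-up argument of Section \ref{proofnon} applies, contradicting globality. You in fact supply details the paper leaves implicit---the ODE-uniqueness argument excluding a tangential touch $u(r_0)=u'(r_0)=0$ and the persistence of $u,u'>0$ past $r_0$---while, like the paper, deferring the final step to the Mitidieri--Poho\v{z}aev test-function argument, which does adapt to the base point $r_0$ since the contributions of the fixed interval $(0,r_0)$ to the integrals are bounded constants dominated by the left-hand side, which grows like $\rho^{2n-1}$.
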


Indeed, Theorem \ref{nonexists} excludes the existence of positive solutions $u$ satisfying $u'(0)>0$. If $u'(0)<0$ then the solution of \eq{equaz}
is initially negative and, if it becomes positive, one can argue as in Section \ref{proofnon} in order to show finite space blow up.\par
Concerning nonexistence of negative solutions, a weaker result holds. First of all, we put together the three static terms
\begin{align}\label{static}
(n-2)u(r)^3+&\frac{2(n-1)}{r}u(r)^2+\frac{n-1}{r^2}u(r)\\\nonumber&=(n-2)\left(u(r)+\frac{n-1+\sqrt{n-1}}{(n-2)r}\right)
\left(u(r)+\frac{n-1-\sqrt{n-1}}{(n-2)r}\right)u(r)\, .
\end{align}
This shows that the static term changes sign whenever the graph of $u$ crosses one of the two hyperbola:
$$
h_1(r)=-\frac{n-1+\sqrt{n-1}}{(n-2)r}\, ,\quad h_2(r)=-\frac{n-1-\sqrt{n-1}}{(n-2)r}\, .
$$
Then rewrite \eq{equaz} as
\begin{align*}
\frac{1}{r^{n-1}}\Big(r^{n-1}u'(r)\Big)'&=(n-2)\left(u(r)+\frac{n-1+\sqrt{n-1}}{(n-2)r}\right)\left(u(r)+\frac{n-1-\sqrt{n-1}}{(n-2)r}\right)u(r)
\\&\,\,-(n-4)u(r)u'(r)+\left[u'(r)+\frac{u(r)}{r}\right]h(r)\, .
\end{align*}
If we assume that
\neweq{assumptions}
h(r)\ge0\quad\forall r\ge0\, ,\qquad n\ge4\, ,
\endeq
and that
\neweq{assu}
\exists R>0\mbox{ such that}\qquad u(R)=h_1(R)\, ,\quad u'(R)\le0\, ,
\endeq
then the above equation tells us that $r\mapsto r^{n-1}u'(r)$ is decreasing for $r>R$. In particular, we have that $u'(r)<0$ and $u(r)<h_1(r)$
for all $r>R$. Finally, this yields the existence of $\gamma>0$ such that
$$
\frac{1}{r^{n-1}}\Big(r^{n-1}u'(r)\Big)'\le\gamma u(r)^3\qquad\forall r>R\, .
$$
By arguing as in the proof of Theorem \ref{nonexists} (see Lemma \ref{MP} below) one obtains that
$$
\exists\overline{R}>R\quad\mbox{s.t.}\quad\lim_{r\to\overline{R}}u(r)=-\infty\, .
$$
Summarizing, we have

\begin{proposition}
If \eqref{assumptions} holds, then there exists no global solution of \eqref{equaz}-\eqref{ic} which satisfies \eqref{assu}.
\end{proposition}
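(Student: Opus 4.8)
The plan is to argue by contradiction. Suppose a global solution $u$ of \eqref{equaz}--\eqref{ic} exists and that \eqref{assu} holds for some $R>0$, and work from the divergence form preceding the statement, in which the two linear factors of \eqref{static} are written as $u-h_1$ and $u-h_2$:
\[
\frac{1}{r^{n-1}}\big(r^{n-1}u'(r)\big)' = (n-2)\big(u-h_1\big)\big(u-h_2\big)u-(n-4)uu'+\Big[u'+\tfrac{u}{r}\Big]h .
\]
Recall $h_1(r)<h_2(r)<0$ for all $r>0$. I would establish, in order: (i) that $u'(r)<0$ and $u(r)<h_1(r)$ for every $r>R$; (ii) a cubic differential inequality $\tfrac{1}{r^{n-1}}(r^{n-1}u')'\le\gamma u^3$ for large $r$; and (iii) finite-$r$ blow-up, which contradicts globality.

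For (i), the crux is the sign of the right-hand side on the boundary $\{u=h_1\}$ and inside the target region. At $r=R$ the static term vanishes since $u(R)=h_1(R)$; moreover $u(R)=h_1(R)<0$, $u'(R)\le0$ and $h(R)\ge0$ give $-(n-4)u(R)u'(R)\le0$ (this is where $n\ge4$ enters) and $[u'(R)+u(R)/R]h(R)\le0$, so $(r^{n-1}u')'\big|_{r=R}\le0$. Because $h_1$ is strictly increasing, $(u-h_1)'(R)=u'(R)-h_1'(R)<0$, hence $u<h_1$ just to the right of $R$. On the region $\{u'<0,\ u<h_1<h_2<0\}$ all three terms on the right-hand side are non-positive and the static one is strictly negative (three negative factors times $n-2>0$); therefore $r\mapsto r^{n-1}u'(r)$ is strictly decreasing there, which keeps $u'<0$ and $u$ decreasing, while $h_1$ keeps increasing, so $u<h_1$ persists. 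I would close this bootstrap on the maximal interval where both strict inequalities hold: if its right endpoint $S$ were finite, continuity would give $u'(S)\le0$ and $u(S)\le h_1(S)$, but the strict monotonicities just obtained upgrade these to $u'(S)<0$ and $u(S)<h_1(S)$, allowing the interval to be extended and forcing $S=\infty$.

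For (ii), the two non-static terms being non-positive, the right-hand side is bounded above by $(n-2)(u-h_1)(u-h_2)u$. Fix any $R_1>R$; since $u$ is decreasing, $|u|$ stays bounded below by a positive constant on $[R_1,\infty)$, whereas $h_1(r),h_2(r)\to0$. Hence $(u-h_1)(u-h_2)\ge\tfrac12 u^2$ for $r$ large, and multiplying by $u<0$ gives $(n-2)(u-h_1)(u-h_2)u\le\tfrac{n-2}{2}u^3$. Thus $\tfrac{1}{r^{n-1}}(r^{n-1}u')'\le\gamma u^3$ with $\gamma=\tfrac{n-2}{2}$ for $r$ large (the mismatch with the ``for all $r>R$'' in the text is harmless, since only a cubic bound once $u$ is sufficiently negative is needed). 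Feeding this into the comparison argument of Lemma \ref{MP}, exactly as in the proof of Theorem \ref{nonexists}, yields a finite $\overline R$ with $u(r)\to-\infty$ as $r\to\overline R^-$, contradicting globality and proving the claim.

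The main obstacle is twofold. The delicate point in (i) is ruling out that $u$ re-touches $h_1$ from below, for which the strict sign of the static term together with the monotonicity of $h_1$ are essential. The more substantial point is the finite-$r$ blow-up of (iii): for $n\ge4$ the gradient and Laplacian contributions alone would merely keep $u$ bounded, so the blow-up must be driven by the cubic nonlinearity, and the quantitative comparison that turns $\tfrac{1}{r^{n-1}}(r^{n-1}u')'\le\gamma u^3$ into genuine finite-$r$ blow-up is exactly the content of Lemma \ref{MP}.
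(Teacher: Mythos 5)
Your proposal follows essentially the same route as the paper: the same divergence-form rewriting of \eqref{equaz} with the factors $u-h_1$, $u-h_2$, the same trapping step (showing $u'<0$ and $u<h_1$ for $r>R$), the same reduction to a cubic differential inequality $\tfrac{1}{r^{n-1}}(r^{n-1}u')'\le\gamma u^3$, and the same appeal to the Mitidieri--Poho\v{z}aev argument of Lemma \ref{MP} to contradict globality. In fact the paper's own text is sketchier than your write-up---it asserts the monotonicity of $r^{n-1}u'$ and the cubic bound ``for all $r>R$'' without spelling out the bootstrap or the large-$r$ restriction that you correctly supply---so your proposal matches the paper's proof and slightly refines it (the only point worth polishing is the initialization of your bootstrap in the degenerate case $u'(R)=0$, which is easiest to handle by running it with the closed inequalities $u'\le 0$, $u\le h_1$ and deriving strictness in the interior).
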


\

\section{Existence results}

We start with a simple but interesting example. If
$$h(r)=-\frac{\alpha r}{2}\Big((n-2)\alpha r^2+n+2\Big)$$
then $u(r)=\alpha r$ solves \eq{equaz}-\eq{ic}. We point out that, in any case, the solution $u$ is {\em global and unbounded}. Moreover, if $\alpha>0$ then
$h(r)<0$ for all $r$ and the solution of \eq{equaz}-\eqref{ic} is positive, while if $\alpha<0$ then $h$ changes sign and the solution is negative.\par
Theorem \ref{nonexists} suggests that \eq{equaz} is more likely to have negative solutions whenever $h$ itself is negative. We prove that this is the case, at least in dimension $n=4$.

\begin{theorem}\label{existence} In dimension $n=4$, if $h(r)\leq 0$ for all $r>0$, then \eqref{equaz} admits infinitely many negative global solutions. More precisely, for any $\alpha<0$ the solution of \eqref{equaz}-\eqref{ic} is global and it satisfies $-\frac2r <u(r)<0$ for all $r>0$.
\end{theorem}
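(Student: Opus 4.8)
\noindent
The plan is to convert the three explicit solutions $u\equiv 0$ and $u_1,u_2$ of \eqref{singular} into constant equilibria and then trap the solution between $u_2=-2/r$ and $0$ by means of a monotone energy. Since $n=4$ annihilates the term $(n-4)uu'$ in \eqref{equaz}, the substitution $u(r)=z(r)/r$ reduces the equation to
\[
z''+\frac{z'}{r}-\frac{2z(z+1)(z+2)}{r^2}=h(r)\,z',
\]
whose constant solutions $z\equiv 0,-1,-2$ are precisely $u=0,-1/r,-2/r$, and for which the desired bound $-2/r<u<0$ becomes simply $-2<z<0$. The initial conditions \eqref{ic} with $\alpha<0$ give $z(r)=\alpha r^2+O(r^3)$, so the solution enters the region $z<0$.

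Next I would set $t=\ln r$ and write $\dot z=dz/dt$; the equation becomes autonomous apart from a damping coefficient,
\[
\ddot z=2z(z+1)(z+2)+r\,h(r)\,\dot z=W'(z)+rh\,\dot z,\qquad W(z):=\tfrac12 z^2(z+2)^2,
\]
where I used $\int 2z(z+1)(z+2)\,dz=\tfrac12 z^2(z+2)^2$. The point is that $W\ge 0$, vanishing exactly at $z=0$ and $z=-2$, and that the energy $\mathcal E:=\tfrac12\dot z^2-W(z)$ obeys $\dot{\mathcal E}=rh\,\dot z^2\le 0$ since $h\le 0$. As $r\to 0^+$ both $z\to 0$ and $\dot z=rz'\to 0$, so $\mathcal E\to 0$ as $t\to-\infty$; monotonicity then forces $\mathcal E(t)\le 0$, that is
\[
\tfrac12\dot z^2\le W(z)=\tfrac12 z^2(z+2)^2\qquad\text{for all }r>0.
\]

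This energy inequality is what confines the trajectory. Any instant at which $z=0$ (resp. $z=-2$) must have $\dot z=0$, i.e. the trajectory can only reach $z=0$ or $z=-2$ through the equilibrium $(z,\dot z)=(0,0)$ or $(-2,0)$; but the constant functions $z\equiv 0$ and $z\equiv -2$ solve the equation, so by uniqueness at any regular point $r>0$ (Section 2) the solution would have to coincide with one of them, which is impossible since $z\not\equiv 0,-2$. Hence $-2<z(r)<0$ for all $r>0$, i.e. $-2/r<u(r)<0$. The same inequality gives $|\dot z|\le\max_{[-2,0]}|z(z+2)|=1$, so $z$ and $\dot z$ remain bounded on the maximal interval, ruling out blow-up and making the solution global; letting $\alpha$ range over $(-\infty,0)$ yields infinitely many such solutions.

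I expect the delicate points to be the two endpoint matters rather than the energy estimate: justifying $\mathcal E\to 0$ as $r\to 0^+$ from the $C^2$ behaviour $u(r)=\alpha r+O(r^2)$ provided by the existence theory of Section 2, and making rigorous the ``no equilibrium is reached at finite $r$'' step through uniqueness for \eqref{equaz} on $(0,\infty)$. The reduction to the clean cubic is the computational core and is exactly where $n=4$ is used, since for $n\ne 4$ the surviving $(n-4)uu'$ term destroys both the gradient structure of the force and the sign of the damping.
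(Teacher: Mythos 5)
Your proof is correct, and at its core it runs on the same mechanism as the paper's: the substitution $z=ru$ (the paper's $v$), the fact that $n=4$ kills the $(n-4)uu'$ term, and the double-well potential $W(z)=\tfrac12 z^2(z+2)^2$ (twice the paper's $H$) whose zeros at $z=0,-2$ act as barriers. The difference is in execution: the paper multiplies \eqref{equv} by $r^2v'$ and integrates up to the first touching time $R$ with $v(R)\in\{0,-2\}$, obtaining the identity $\tfrac{R^2v'(R)^2}{2}+\int_0^R(-rh(r))\,rv'(r)^2\,dr=0$, whereas you pass to logarithmic time and run the very same identity in differential form, as monotonicity of the Lyapunov function $\mathcal E=\tfrac12\dot z^2-W(z)$. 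This repackaging buys two real improvements. First, $\mathcal E\le 0$ holds at \emph{every} $r$, not just at a touching time, which yields the pointwise bound $|\dot z|\le\max_{[-2,0]}|z(z+2)|=1$, i.e.\ $|z'|\le 1/r$; this makes the global-continuation step transparent, where the paper's closing sentence (``by \eqref{equv} also $v'$ and $v''$ remain bounded'') leaves a Gronwall-type argument to the reader. Second, and more importantly, you make explicit the step the paper compresses into ``we get a contradiction'': for $n=4$ the paper's identity only forces $v'(R)=0$ (indeed, when $h\equiv 0$ on $[0,R]$ it gives nothing else), and to conclude one must invoke uniqueness for the ODE at the regular point $R$ against the constant solutions $v\equiv 0$ and $v\equiv-2$ --- exactly the argument you spell out. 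So your write-up not only is correct but also closes a small gap in the published proof. One side remark of yours is imprecise: for $n\ne 4$ the surviving term does not destroy the gradient structure of the force (in log time the force is $(n-2)z(z+1)(z+2)$ for every $n$, still a gradient); what it destroys is only the sign of the damping, since it contributes $(n-4)(1+z)\dot z$ with a coefficient of indefinite sign.
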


\

\section{Remarks and open problems}

We discuss some open problem related to conformal $f$-Yamabe metrics and to solutions of equation \eqref{cfypg}.

\medskip

\begin{enumerate}

\item In Theorem \ref{nonexists} we stated a partial nonexistence results for radial solutions in the Euclidean space, while Theorem \ref{existence} provides a general existence result. It would be interesting to prove a sharp condition on the potential function $f$ (or on its derivative) ensuring existence of global solutions to \eqref{equaz}.

\medskip

\item It is well known \cite{gnn} that global positive solution the Yamabe equation
$$
-\Delta u = u^{\frac{n+2}{n-2}}  \quad\text{on } \RR^n
$$
have to be radial (and thus classified). We could ask the same question for (general) solutions to \eqref{cfyp}. For a given $f\in C^{\infty}(\RR^n)$, are there any nonradial solutions $w$? If $f$ is radial, are all solutions to \eqref{cfyp} radial?

\medskip

\item In this paper we studied conformal $f$-Yamabe metrics for $(\RR^n,g_{\RR^n})$. What about other rotationally symmetric spaces? In particular, what we can say for the hyperbolic space $(\HH^n,g_{\HH^n})$ or the round sphere $(\SS^n,g_{\SS^n})$?

\medskip

\item In the existence result (Theorem \ref{existence}) the dimension $n=4$ seems to be peculiar, at least from the analytic point of view. Is there a geometric interpretation of this fact?

\end{enumerate}

\

\section{Proof of Theorem \ref{nonexists}}\label{proofnon}

Throughout this proof we will need the following particular class of test functions.

\begin{definition}\label{rho}
Let $\rho>0$. We say that a nonnegative function $\phi\in C^2_c[0,\infty)$ satisfies the $\rho$-property if
\neweq{prima}
\phi(r)=1\quad\mbox{for }r\in[0,\rho]\qquad\mbox{and}\qquad\phi(r)=0\quad\mbox{for }r\ge2\rho\, ,
\endeq
and if
\neweq{seconda}
\int^{2\rho}\frac{|\phi''(r)+\frac{2(n-1)}{r}\phi'(r)|^{3/2}}{\sqrt{\phi(r)}}\, dr<\infty\, .
\endeq
\end{definition}

It is clear that such functions exist; to see this, it suffices to replace any $\phi$ satisfying \eq{prima} with a power $\phi^k$ for
$k$ sufficiently large so that \eq{seconda} will be satisfied.\par
For the proof of Theorem \ref{nonexists}, we first observe that, since $\alpha>0$ the solution of \eq{equaz}-\eq{ic} is positive and
strictly increasing in a right neighborhood of $r=0$, say in some maximal interval $(0,R)$. Clearly, among $u$ and $u'$ the first one which can vanish is $u'$.
But if $u'(R)=0$ then, using the lower bound for $h$, we see that \eq{equaz} yields
$$
u''(R)\ge(n-2)u(R)^3+\frac{2(n-1)}{R}u(R)^2+\big(\frac{n-1}{R^2}+\frac{h(R)}{R}\big)u(R)\ge(n-2)u(R)^3+\frac{2(n-1)}{R}u(R)^2>0\, ,
$$
giving a contradiction. Therefore, $u'$ cannot vanish and two cases may occur:
\neweq{cases}
(i)\ R=\infty\, ,\qquad(ii)\ R<\infty\mbox{ and }\lim_{r\uparrow R}u(r)=+\infty\, .
\endeq
The proof will be complete if we show that $(ii)$ occurs. At this point, we distinguish two cases.\par\medskip
\noindent$\bullet$ {\bf Case }$n\in\{3,4\}$.\par\smallskip
In order to prove $(ii)$ in \eq{cases}, we argue for contradiction by assuming that $R=\infty$ so that $u,u'>0$ for all $r>0$. From the assumptions
and \eq{equaz}, we then infer that (recall $n\le4$)
\neweq{ineq}
u''(r)+\frac{2n-2}{r}u'(r)>(n-2)u(r)^3>0\qquad\forall r>0\, .
\endeq
To reach a contradiction we need the following estimate, inspired to the method developed by Mitidieri-Poho\v zaev \cite{mp} (see also the proof
of \cite[Proposition 5]{gazgru}).

\begin{lemma}\label{MP}
Assume that $w\in C^2[0,\infty)$. Then for any $\eps>0$, for any $\rho>0$ and for all $\phi$ satisfying the $\rho$-property, we have
\begin{align*}
\int_0^{2\rho}r^{2n-2}\left|w''(r)+\frac{2n-2}{r}w'(r)\right|\phi(r)\, dr&\le\frac{\eps}{3}\int_0^{2\rho}r^{2n-2}|w(r)|^3\phi(r)\, dr
\\&\,\,+\frac{2}{3\sqrt{\eps}}\int_\rho^{2\rho}r^{2n-2}\frac{|\phi''(r)+\frac{2n-2}{r}\phi'(r)|^{3/2}}{\sqrt{\phi(r)}}\, dr\, .
\end{align*}
\end{lemma}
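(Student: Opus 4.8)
The plan is to exploit the divergence structure of the operator on the left-hand side. Writing $\mathcal{L}v:=v''+\tfrac{2n-2}{r}v'$ for brevity, one has the pointwise identity $r^{2n-2}\mathcal{L}v=(r^{2n-2}v')'$, so the weight $r^{2n-2}$ turns $\mathcal{L}$ into an exact derivative. I would first establish the estimate for the signed integral $\int_0^{2\rho}r^{2n-2}(\mathcal{L}w)\phi\,dr$ and deal with the modulus afterwards. Integrating by parts twice to move both derivatives from $w$ onto $\phi$ gives
$$\int_0^{2\rho}r^{2n-2}(\mathcal{L}w)\phi\,dr=\int_0^{2\rho}(r^{2n-2}w')'\phi\,dr=\int_0^{2\rho}r^{2n-2}w\,(\mathcal{L}\phi)\,dr.$$
All boundary terms vanish: at $r=2\rho$ because $\phi$ and $\phi'$ vanish there by \eqref{prima}, and at $r=0$ because the weight $r^{2n-2}$ vanishes while $w,w'$ stay bounded. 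Since $\phi\equiv1$ on $[0,\rho]$ forces $\mathcal{L}\phi\equiv0$ there, the last integral is effectively supported on $[\rho,2\rho]$.

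The second step is a weighted Young inequality tuned to reproduce the stated constants. I would factor the integrand as
$$r^{2n-2}|w|\,|\mathcal{L}\phi|=\left(r^{2n-2}|w|^3\phi\right)^{1/3}\left(r^{2n-2}\,\frac{|\mathcal{L}\phi|^{3/2}}{\sqrt{\phi}}\right)^{2/3},$$
which is an identity once the powers of $r$ and $\phi$ are collected. Applying Young's inequality $XY\le\tfrac{1}{3}X^3+\tfrac{2}{3}Y^{3/2}$ with the rescaling $X\mapsto\eps^{1/3}X$, $Y\mapsto\eps^{-1/3}Y$ yields the pointwise bound
$$r^{2n-2}|w|\,|\mathcal{L}\phi|\le\frac{\eps}{3}\,r^{2n-2}|w|^3\phi+\frac{2}{3\sqrt{\eps}}\,r^{2n-2}\,\frac{|\mathcal{L}\phi|^{3/2}}{\sqrt{\phi}},$$
and integrating over $[0,2\rho]$ produces exactly the right-hand side of the Lemma. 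Here the integrability condition \eqref{seconda} is precisely what is needed to control the second term near $r=2\rho$, where $\phi\to0$ and the factor $\phi^{-1/2}$ is singular.

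To pass from the signed integral to $\int_0^{2\rho}r^{2n-2}|\mathcal{L}w|\phi\,dr$ I would split $(0,2\rho)$ into the open sets $\Omega_\pm:=\{\pm\mathcal{L}w>0\}$ and integrate by parts on each component interval, using $|\mathcal{L}w|=\pm\mathcal{L}w$ accordingly. The bulk contributions are then, in absolute value, bounded by $\int_0^{2\rho}r^{2n-2}|w|\,|\mathcal{L}\phi|\,dr$, to which the previous step applies verbatim. The main obstacle is that each point $c$ at which $\mathcal{L}w$ changes sign produces an uncompensated boundary term $\pm2\,c^{2n-2}w'(c)\phi(c)$, and for a completely arbitrary $w\in C^2$ these terms cannot in general be absorbed into the right-hand side. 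The key point, however, is that whenever the Lemma is applied $\mathcal{L}w$ has constant sign on $\supp\phi$: in the proof of Theorem \ref{nonexists} the relevant function $w=u$ satisfies $\mathcal{L}u>(n-2)u^3>0$ by \eqref{ineq}, so $\Omega_-=\emptyset$, no sign changes occur, and $|\mathcal{L}w|=\mathcal{L}w$ throughout. Consequently the two integrations by parts and the weighted Young inequality already deliver the full statement, the modulus being harmless precisely because of the sign-definiteness forced by the differential inequality.
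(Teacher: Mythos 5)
Your proof is correct and is essentially the paper's own argument: the paper performs the same two integrations by parts by viewing $w(|x|)$ as a radial function $v$ on $\R^{2n-1}$, so that the weight $r^{2n-2}$ is the volume factor and your identity $\bigl(r^{2n-2}w'\bigr)'=r^{2n-2}\bigl(w''+\frac{2n-2}{r}w'\bigr)$ becomes $\Delta v$, and it then applies exactly your Young inequality $ab\le\frac{\eps a^3}{3}+\frac{2b^{3/2}}{3\sqrt{\eps}}$ with the same factorization and the same role for \eqref{seconda}. Your caveat about the modulus is also accurate: the paper's proof likewise only bounds the signed integral $\int_{B_{2\rho}}\Delta v\,\Phi$, and the absolute value in the statement is legitimate only because in the application \eqref{ineq} (resp.\ \eqref{ineq2}) makes $w''+\frac{2n-2}{r}w'$ positive on the support of $\phi$, precisely the sign-definiteness you invoke.
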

\begin{proof} In this proof we will use the Young inequality in the following form:
\neweq{young1}
\forall\eps>0\quad \forall a,b\ge0\qquad ab\le \frac{\eps a^3}{3}+\frac{2b^{3/2}}{3\sqrt{\eps}}\, .
\endeq

Fix $\eps>0$ and $\rho>0$. We use a PDE approach and introduce the radial $C^2(\R^{2n-1})$-function $v$ such that $v(x)=w(|x|)$ for
all $x$: note that the space dimension is here $2n-1$ and that $\Delta v(x)=w''(|x|)+\frac{2n-2}{|x|}w'(|x|)$. Then, multiply $\Delta v$ by some
function $\Phi(x)=\phi(|x|)$, where $\phi$ satisfies the $\rho$-property. Since $\Phi\equiv1$ in $B_\rho$, two integration by parts and \eq{young1} yield
\begin{align*}
\int_{B_{2\rho}}\Delta v\Phi&=\int_{B_{2\rho}}v\Delta\Phi=\int_{B_{2\rho}\setminus B_\rho}v\Delta\Phi\, .
=\int_{B_{2\rho}\setminus B_\rho}v\Phi^{1/3}\, \frac{\Delta\Phi}{\Phi^{1/3}}\\&\le\frac{\eps}{3}\int_{B_{2\rho}}|v|^3\Phi+
\frac{2}{3\sqrt{\eps}}\int_{B_{2\rho}\setminus B_\rho}\frac{|\Delta\Phi|^{3/2}}{\Phi^{1/2}}
\end{align*}
and back to the radial form of $v$ and $\Phi$ this proves the statement.\end{proof}

Take a function $\phi_1$ satisfying the $1$-property and observe that the function
$$\phi_\rho(r):=\phi_1\left(\frac{r}{\rho}\right)\qquad\forall\rho>1$$
satisfies the $\rho$-property. Therefore, for all $\eps>0$, from \eq{ineq} and Lemma \ref{MP} we infer that
\begin{align*}
(n-2)\int_0^\rho r^{2n-2}u(r)^3\, dr&\le(n-2)\int_0^{2\rho} r^{2n-2}u(r)^3\phi_\rho(r)\, dr\\&\le
\int_0^{2\rho}r^{2n-2}\Big(u''(r)+\frac{2n-2}{r}u'(r)\Big)\phi_\rho(r)\, dr
\end{align*}
$$
\le\frac{\eps}{3}\int_0^{2\rho}r^{2n-2}u(r)^3\phi_\rho(r)\, dr
+\frac{2}{3\sqrt{\eps}}\int_\rho^{2\rho}r^{2n-2}\frac{|\phi_\rho''(r)+\frac{2n-2}{r}\phi_\rho'(r)|^{3/2}}{\sqrt{\phi_\rho(r)}}\, dr\, .
$$
Take $0<\eps<3(n-2)$, then the latter inequality yields
$$
\left(n-2-\frac{\eps}{3}\right)\int_0^\rho r^{2n-2}u(r)^3\, dr\le
\frac{2}{3\sqrt{\eps}}\int_\rho^{2\rho}r^{2n-2}\frac{|\phi_\rho''(r)+\frac{2n-2}{r}\phi_\rho'(r)|^{3/2}}{\sqrt{\phi_\rho(r)}}\, dr\, .
$$
With the change of variable $r=\rho t$ this becomes
$$
\left(n-2-\frac{\eps}{3}\right)\int_0^1 t^{2n-2}u(\rho t)^3\, dt\le\frac{2}{3\sqrt{\eps}}\frac{1}{\rho^3}\int_1^2 t^{2n-2}
\frac{|\phi_1''(t)+\frac{2n-2}{t}\phi_1'(t)|^{3/2}}{\sqrt{\phi_1(t)}}\, dt\ ,
$$
Since $u$ is increasing on $\R_+$, we have $u(\rho t)\ge u(t)$ for all $\rho>1$ so that the left hand side of this inequality is positive and increasing
for $\rho\ge1$. By letting $\rho\to\infty$, the right hand side tends to $0$ and this leads to a contradiction which rules out case $(i)$. Hence, case
$(ii)$ occurs and the solution $u$ of \eq{equaz}-\eq{ic} with $\alpha>0$ cannot be continued to all the interval $[0,\infty)$. This completes the proof
of Theorem \ref{nonexists} in the case $n=3,4$.\par\medskip
\noindent$\bullet$ {\bf Case }$n\ge5$.\par\smallskip
The same arguments leading to \eq{ineq} now yield
\neweq{ineq2}
u''(r)+\frac{2n-2}{r}u'(r)+\frac{n-4}{2}\Big(u(r)^2\Big)'>(n-2)u(r)^3>0\qquad\forall r>0
\endeq
and we need to estimate one more term. The companion of Lemma \ref{MP} reads

\begin{lemma}\label{MP2}
Assume that $w\in C^2[0,\infty)$. Then for any $\delta>0$, for any $\rho>0$ and for all $\phi$ satisfying the $\rho$-property, we have
$$
\int_0^{2\rho}r^{2n-2}\Big(w(r)^2\Big)'\phi(r)\, dr\le\frac{2\sqrt{\delta}}{3}\int_0^{2\rho}r^{2n-2}|w(r)|^3\phi(r)\, dr+
\frac{1}{3\delta}\int_\rho^{2\rho}r^{2n-2}\frac{|\phi'(r)|^3}{\phi(r)^2}\, dr\, .
$$
\end{lemma}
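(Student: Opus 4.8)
The plan is to mirror the proof of Lemma \ref{MP}, exploiting the fact that $(w^2)'$ is a first order quantity, so that a single integration by parts suffices (instead of the two needed there). First I would integrate by parts in the weighted integral, writing $(w^2)'\,dr=d(w^2)$ and moving the derivative onto the weight $r^{2n-2}\phi$:
\begin{equation*}
\int_0^{2\rho} r^{2n-2}\big(w^2\big)'\phi\,dr = \big[r^{2n-2}\phi\,w^2\big]_0^{2\rho} - (2n-2)\int_0^{2\rho} r^{2n-3}w^2\phi\,dr - \int_0^{2\rho} r^{2n-2}w^2\phi'\,dr.
\end{equation*}
The boundary term vanishes because $\phi(2\rho)=0$ by \eqref{prima} and $r^{2n-2}\to0$ as $r\to0$. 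The first remaining integral is nonpositive, since $\phi\ge0$, hence it can simply be discarded, leaving
\begin{equation*}
\int_0^{2\rho} r^{2n-2}\big(w^2\big)'\phi\,dr \le \int_0^{2\rho} r^{2n-2}w^2|\phi'|\,dr,
\end{equation*}
where, again by \eqref{prima}, the integrand is supported in $[\rho,2\rho]$.

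The second step is the pointwise estimate of $w^2|\phi'|$ through a suitably weighted Young inequality, calibrated so as to reproduce exactly the two terms on the right hand side of the statement. I would use the form
\begin{equation*}
ab \le \frac{2}{3}(\mu a)^{3/2} + \frac{1}{3}\Big(\frac{b}{\mu}\Big)^3 \qquad \forall\, a,b\ge0,\ \forall\,\mu>0,
\end{equation*}
which is Young's inequality with the conjugate exponents $3/2$ and $3$ (the companion of \eqref{young1}, with the roles of the two exponents interchanged). Applying it with the splitting $a=w^2\phi^{2/3}$, $b=|\phi'|\phi^{-2/3}$ and the choice $\mu=\delta^{1/3}$ gives
\begin{equation*}
w^2|\phi'| \le \frac{2\sqrt{\delta}}{3}\,|w|^3\phi + \frac{1}{3\delta}\,\frac{|\phi'|^3}{\phi^2},
\end{equation*}
and integrating this against $r^{2n-2}$ over $[0,2\rho]$ (keeping the first term on all of $[0,2\rho]$ and restricting the second to $[\rho,2\rho]$, where $\phi'$ is supported) yields precisely the claimed bound.

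The proof is essentially bookkeeping, and the only genuine choice to be made is the splitting $w^2|\phi'|=(w^2\phi^{2/3})(|\phi'|\phi^{-2/3})$ together with the calibration $\mu=\delta^{1/3}$, which are dictated by the requirement of producing exactly the constants $\tfrac{2\sqrt\delta}{3}$ and $\tfrac{1}{3\delta}$ and the precise weights $|w|^3\phi$ and $|\phi'|^3/\phi^2$. The only (minor) point requiring care is that the weight $|\phi'|^3/\phi^2$ need not be integrable for an arbitrary $\phi$ satisfying the $\rho$-property; but, exactly as in the remark following Definition \ref{rho}, this is harmless, since replacing $\phi$ by a sufficiently large power $\phi^k$ guarantees simultaneously the finiteness of this integral and of the one in \eqref{seconda}. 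Alternatively, the whole computation could be recast, just as in the proof of Lemma \ref{MP}, in terms of the auxiliary radial function $v(x)=w(|x|)$ on $\R^{2n-1}$, using $\mathrm{div}\big(v^2\tfrac{x}{|x|}\big)=\partial_r(v^2)+\tfrac{2n-2}{|x|}v^2$ and the divergence theorem in place of the one dimensional integration by parts.
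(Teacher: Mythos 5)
Your proof is correct and follows essentially the same route as the paper's: a single integration by parts with the nonpositive term $-(2n-2)\int_0^{2\rho} r^{2n-3}w^2\phi\,dr$ discarded since $\phi\ge0$, followed by Young's inequality with conjugate exponents $3/2$ and $3$; your calibrated form with $\mu=\delta^{1/3}$ is exactly the paper's inequality \eqref{young2}, and applying it pointwise to $w^2|\phi'|$ rather than splitting the weight $r^{2n-2}$ between the two factors (as the paper does) is an immaterial difference. Your closing worry about integrability of $|\phi'|^3/\phi^2$ is also harmless for the lemma itself, since the claimed inequality holds trivially whenever its right-hand side is infinite.
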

\begin{proof} In this proof we will use the Young inequality in the following form:
\neweq{young2}
\forall\delta>0\quad \forall a,b\ge0\qquad ab\le \frac{2\sqrt{\delta}\, a^{3/2}}{3}+\frac{b^3}{3\delta}\, .
\endeq

Fix $\delta>0$ and $\rho>0$; then take $\phi$ satisfying the $\rho$-property. An integration by parts yields
$$\int_0^{2\rho}r^{2n-2}\Big(w(r)^2\Big)'\phi(r)\, dr=-\int_0^{2\rho}w(r)^2\Big(2(n-1)r^{2n-3}\phi(r)+r^{2n-2}\phi'(r)\Big)\, dr$$
and, since $\phi\ge0$,
\begin{eqnarray*}
\int_0^{2\rho}r^{2n-2}\Big(w(r)^2\Big)'\phi(r)\, dr &\le& -\int_0^{2\rho}r^{2n-2}w(r)^2\phi'(r)\, dr\\
&\le& \int_0^{2\rho}\Big(r^{4(n-1)/3}w(r)^2\phi(r)^{2/3}\Big)\cdot\left(r^{2(n-1)/3}\frac{|\phi'(r)|}{\phi(r)^{2/3}}\right)\, dr\\
\mbox{[by \eq{young2}] }\ &\le& \frac{2\sqrt{\delta}}{3}\int_0^{2\rho}r^{2n-2}|w(r)|^3\phi(r)\, dr+
\frac{1}{3\delta}\int_0^{2\rho}r^{2n-2}\frac{|\phi'(r)|^3}{\phi(r)^2}\, dr\, .
\end{eqnarray*}
Since $\phi'\equiv0$ on $(0,\rho)$, this completes the proof.\end{proof}

Take again a function $\phi_1$ satisfying the $1$-property and let $\phi_\rho(r):=\phi_1(r/\rho)$ for all $\rho>1$
so that $\phi_\rho$ satisfies the $\rho$-property. Multiply \eq{ineq2} by $\phi_\rho$ and integrate over $(0,2\rho)$ to obtain
\begin{eqnarray*}
(n\!-\!2)\int_0^\rho r^{2n-2}u(r)^3\, dr &\le& (n\!-\!2)\int_0^{2\rho} r^{2n-2}u(r)^3\phi_\rho(r)\, dr\\
&\le& \int_0^{2\rho}r^{2n-2}\Big(u''(r)+\frac{2n-2}{r}u'(r)\Big)\phi_\rho(r)\, dr \\
&&\,\,+\frac{n-4}{2}\int_0^{2\rho}r^{2n-2}\Big(u(r)^2\Big)'\phi_\rho(r)\, dr\, .
\end{eqnarray*}

Then, by Lemmas \ref{MP} and \ref{MP2} we infer that
$$
\left(n-2-\frac{\eps}{3}-\frac{(n-4)\sqrt{\delta}}{3}\right)\int_0^\rho r^{2n-2}u(r)^3\, dr
$$
$$
\le\frac{2}{3\sqrt{\eps}}\int_\rho^{2\rho}r^{2n-2}\frac{|\phi_\rho''(r)+\frac{2n-2}{r}\phi_\rho'(r)|^{3/2}}{\sqrt{\phi_\rho(r)}}\, dr
+\frac{n-4}{6\delta}\int_\rho^{2\rho}r^{2n-2}\frac{|\phi_\rho'(r)|^3}{\phi_\rho(r)^2}\, dr\, .
$$
Take $\eps>0$ and $\delta>0$ sufficiently small in such a way that $C_{\eps,\delta}:=n-2-\frac{\eps}{3}-\frac{(n-4)\sqrt{\delta}}{3}>0$ and
perform the change of variable $r=\rho t$ to obtain
$$
C_{\eps,\delta}\int_0^1 t^{2n-2}u(\rho t)^3\, dt\le
\frac{2}{3\sqrt{\eps}}\frac{1}{\rho^3}\int_1^2 t^{2n-2}\frac{|\phi_1''(t)+\frac{2n-2}{t}\phi_1'(t)|^{3/2}}{\sqrt{\phi_1(t)}}\, dt
+\frac{n-4}{6\delta}\frac{1}{\rho^3}\int_1^2 t^{2n-2}\frac{|\phi_1'(t)|^3}{\phi_1(1)^2}\, dt\, .
$$
Since $u$ is increasing on $\R_+$, we have $u(\rho t)\ge u(t)$ for all $\rho>1$ so that the left hand side of this inequality is positive and increasing
for $\rho\ge1$. By letting $\rho\to\infty$, the right hand side tends to $0$ and this leads to a contradiction which rules out case $(i)$. Hence, case
$(ii)$ occurs and the solution $u$ of \eq{equaz}-\eq{ic} with $\alpha>0$ cannot be continued to all the interval $[0,\infty)$. This completes the proof
of Theorem \ref{nonexists} also in the case $n\ge5$.

\

\section{Proof of Theorem \ref{existence}}

For our convenience, we introduce the functions
$$
h(y)=(y+2)(y+1)y\, ,\qquad H(y)=\int_0^yh(\xi)d\xi=\frac{(y+2)^2y^2}{4}\, .
$$
Let $v(r)=ru(r)$, then $v$ satisfies the equation
\neweq{equv}
v''(r)+\left[\frac{n-3}{r}+\frac{n-4}{r}v(r)-h(r)\right]v'(r)=\frac{n-2}{r^2}\, h\big(v(r)\big)\, .
\endeq

If $u$ satisfies \eq{equaz}-\eq{ic} with $\alpha<0$, then $u(r)$ and $u'(r)$ are strictly negative in a right neighborhood of $r=0$.
By definition of $v$, also $v(r)$ is strictly negative in a right neighborhood of $r=0$.
We claim that $-2<v(r)<0$ for all $r>0$. If not, let $R>0$ be the first time where
\neweq{either}
\mbox{either }v(R)=0\mbox{ or }v(R)=-2\, .
\endeq
Multiply \eq{equv} by $r^2v'(r)$ and integrate over $[0,R]$ to obtain
$$
\int_0^R\Big(r^2v''(r)v'(r)+\big[n-3+(n-4)v(r)-rh(r)\big]rv'(r)^2\Big)\, dr=0
$$
since $H(0)=H(-2)=0$. An integration by parts then yields
$$
\int_0^R\big[n-4+(n-4)v(r)-rh(r)\big]rv'(r)^2\, dr+\frac{R^2v'(R)^2}{2}=0\, .
$$
If $n=4$ and $h(r)\leq 0$ we get a contradiction which shows that $R$ does not exist and therefore $-2<v(r)<0$ for all $r>0$. This proves the claim. Hence, by \eqref{equv}, also $v'$ and $v''$ remain bounded and the solutions exists. This concludes the proof of Theorem \ref{existence}.

\

\

\begin{ackn} The authors are members of the Gruppo Nazionale per l'Analisi Matematica, la Probabilit\`{a} e le loro Applicazioni (GNAMPA) of the Istituto Nazionale di Alta Matematica (INdAM). The first author is supported by the PRIN project ``Variational methods, with applications to problems in mathematical physics and geometry''. The second author is supported by the PRIN project ``Equazioni alle derivate parziali di tipo ellittico e parabolico: aspetti
geometrici, disuguaglianze collegate, e applicazioni''.
\end{ackn}

\

\

\end{document}